\newcommand\cyr
\renewcommand\rmdefault{wncyr}
\renewcommand\sfdefault{wncyss}
\renewcommand\encodingdefault{OT2}
\DeclareTextFontCommand{\textcyr}{\cyr}
\renewcommand{\geq}{\geqslant}	
\renewcommand{\leq}{\leqslant}
\def\mid|{\hs\middle|\hs}
\def\hs{\hspace*{0.1cm}}
\def\1{\mathbb{1}}
\newcommand{\sub}{\subseteq}	
\renewcommand{\1}{\mathds{1}}					
\newcommand{\NN}{\mathbb{N}}	
\newcommand{\RR}{\mathbb{R}} 
\newcommand{\ZZ}{\mathbb{Z}} 
\def\t{\textnormal}
\newcommand{\ohne}{\backslash}
\def\c{\cite }
\newcommand{\hide}[1]{}  
\def\eps{\varepsilon}
\def\phi{\varphi}
\def\rho{\varrho}
\DeclareMathAlphabet\mathbfcal{OMS}{cmsy}{b}{n}
\newcommand{\Sup}{\vee}		
\newcommand{\Inf}{\wedge}	%
\let\int\relax 
\DeclareMathOperator{\int}{int}
\renewcommand{\theequation}{\arabic{equation}}
\newcounter{Zaehler}
\theoremstyle{plain}
\newtheorem{theorem}{Theorem}
\newtheorem{lemma}[theorem]{Lemma}
\newtheorem{proposition}[theorem]{Proposition}
\newtheorem{definition}[theorem]{Definition}			
\theoremstyle{plain}
\newtheorem{example}[theorem]{Example}
\newtheorem*{remark}{Remark}
\newcommand{\eqnum}{\leavevmode\hfill\refstepcounter{equation}\textup{\tagform@{\theequation}}}
\theoremstyle{nonumberplain}
\newtheorem{proof}{Proof}
\begin{document}
\title{Pervasive and weakly pervasive pre-Riesz spaces}
\author{Anke Kalauch\footnote{FR Mathematik, Institut für Analysis, TU Dresden, 01062 Dresden, Germany,\newline \texttt{anke.kalauch@tu-dresden.de}},
Helena Malinowski\footnote{Unit for BMI, North-West University, Private Bag X6001, Potchefstroom, 2520, South Africa,\newline \texttt{lenamalinowski@gmx.de}}}

\maketitle
\begin{abstract}
Pervasive pre-Riesz spaces are defined by means of vector lattice covers. To avoid the computation of a vector lattice cover, we give two distinct intrinsic characterizations of pervasive pre-Riesz spaces. We introduce weakly pervasive pre-Riesz spaces and observe that this property can be easily checked in examples. We relate weakly pervasive pre-Riesz spaces to pre-Riesz spaces with the Riesz decomposition property.
\end{abstract}

\textbf{Keywords:} Pre-Riesz space, pervasive, Riesz decomposition property

\textbf{Mathematics Subject Classification (2010):} 46A40, 06F20


\section{Introduction}
In the analysis of ordered vector spaces which are not vector lattices there are certain additional properties, such as pervasiveness, fordability and the Riesz decomposition property, which allow to generalize well-known results from the vector lattice theory. In the present paper we characterize and relate some of these properties in pre-Riesz spaces.

An ordered vector space $X$ is called pre-Riesz if there is a vector lattice $Y$
and a bipositive linear map $i\colon X \rightarrow Y$ such that $i(X)$ is order dense in $Y$. The pair $(Y,i)$ is then called a vector lattice cover of $X$. The theory of pre-Riesz spaces and
their vector lattice covers is due to van Haandel, see \c{vanHaa}. Pre-Riesz spaces cover a
wide range of examples, in particular every Archimedean directed ordered
vector space is a pre-Riesz space.

We mainly deal with pervasive pre-Riesz spaces, i.e.\ spaces $X$ such that for every $y\in Y$ with $y>0$ there is $x\in X$ with $0<i(x)\leq y$.
Pervasive pre-Riesz spaces were introduced in \c{3}.
To illustrate their importance, we list some results from the literature where pervasive pre-Riesz spaces play an essential role.
In \c[Theorem~2.6]{3} pervasiveness is a crucial assumption to show that the restriction of a band in $Y$ to $X$ is a band in $X$. Moreover, it is established that the space $L_r(\ell_0^\infty)$ of regular operators on the space $\ell_0^\infty$ of eventually constant sequences is a pervasive pre-Riesz space. This yields that the space $L_{oc}(\ell_0^\infty)$ of order continuous operators is a band in $L_r(\ell_0^\infty)$. This statement is a first instance of an Ogasawara type result where the range space is not Dedekind complete. 
In \c[Example 22]{04} the authors prove that $L_{oc}(\ell_0^\infty)$ is pervasive.
In \c[Theorem 14]{02} it is shown that in a pervasive pre-Riesz space with the Riesz decomposition property every directed order closed ideal with a directed double disjoint complement is a band. 
In \c[Theorem 27]{03} it is established that for a directed ideal $I$ in $X$ a vector lattice cover of $I$ is given by the smallest extension ideal of $I$ in $Y$, provided $X$ is pervasive.
In \c[Theorem 5.3]{vanImhoff2017} the author obtains that the inverse of a bijective Riesz* homomorphism between pre-Riesz spaces is again a Riesz* homomorphism, provided the pre-image space is pervasive. In \c[Theorem 7.2]{vanImhoff2017} spaces of differentiable functions which are defined on sufficiently smooth manifolds and vanish at infinity are shown to be pervasive pre-Riesz spaces.

So far, the definition of pervasiveness is given using a vector lattice cover. In examples it might be difficult to find a convenient representation of a vector lattice cover of a given pre-Riesz space. Therefore in Section~\ref{intrinsic} we give two intrinsic characterizations of pervasiveness, which do not use vector lattice covers.
In Section~\ref{weakly_pervasive} we introduce weakly pervasive pre-Riesz spaces and give a characterization, such that this property can be easily checked in examples. 
We relate pervasive and weakly pervasive pre-Riesz spaces to pre-Riesz spaces with the Riesz decomposition property.
The following diagram illustrates the results and counterexamples obtained in Section~\ref{weakly_pervasive}.
\begin{center}
\begin{picture}(0,0)%
\includegraphics{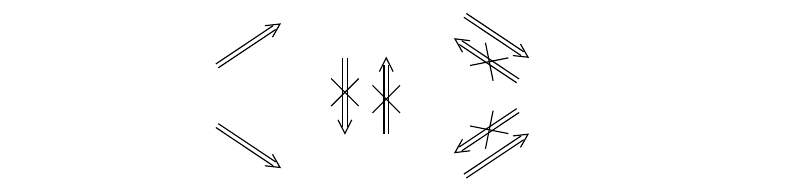}%
\end{picture}%
\setlength{\unitlength}{2901sp}%
\begingroup\makeatletter\ifx\SetFigFont\undefined%
\gdef\SetFigFont#1#2#3#4#5{%
  \reset@font\fontsize{#1}{#2pt}%
  \fontfamily{#3}\fontseries{#4}\fontshape{#5}%
  \selectfont}%
\fi\endgroup%
\begin{picture}(5138,1185)(-1216,-1516)
\put( 91,-961){\makebox(0,0)[rb]{\smash{{\SetFigFont{8}{9.6}{\familydefault}{\mddefault}{\updefault}{\color[rgb]{0,0,0}vector lattice}%
}}}}
\put(1126,-466){\makebox(0,0)[b]{\smash{{\SetFigFont{8}{9.6}{\familydefault}{\mddefault}{\updefault}{\color[rgb]{0,0,0}pervasive}%
}}}}
\put(1216,-1501){\makebox(0,0)[b]{\smash{{\SetFigFont{8}{9.6}{\familydefault}{\mddefault}{\updefault}{\color[rgb]{0,0,0}RDP}%
}}}}
\put(2431,-961){\makebox(0,0)[lb]{\smash{{\SetFigFont{8}{9.6}{\familydefault}{\mddefault}{\updefault}{\color[rgb]{0,0,0}weakly pervasive}%
}}}}
\end{picture}%

\end{center}
Since we characterize the weak pervasiveness intrinsically (i.e.\ without using vector lattice covers), our results provide a convenient method to show that a given pre-Riesz space is not pervasive. This is used to answer the open question whether the space in \cite{Gaa2005} is pervasive, see Example~\ref{properties.14} below.

\section{Preliminaries}
Let $X$ be a real vector space and let $X_+$ be a \emph{cone} in $X$, that is,  $X_+$ is a wedge ($x,y\in X_+$ and $\lambda,\mu\geq 0$ imply $\lambda x + \mu y \in X_+$) and $X_+ \cap (-X_+) =\left\{0\right\}$. In $X$ a partial order is defined by $x\leq y$ whenever $y-x\in X_+$. 
The space $(X,X_+)$ (or, loosely $X$) is then called a (\emph{partially}) \emph{ordered vector space}.

An ordered vector space $X$ is called \emph{Archimedean} if for every $x,y\in X$ with $nx\leq y$ for every $n\in\NN$ one has $x\leq 0$. 
Clearly, every subspace of an Archimedean ordered vector space is Archimedean. The ordered vector space $X$ is called \emph{directed} if for every $x,y\in X$ there is $z\in X$ such that $x,y\leq z$. The space $X$ is directed if and only if  $X_+$ is \emph{generating} in $X$, that is, $X = X_+ - X_+$. 
An ordered vector space $X$ has the \emph{Riesz decomposition property} (\emph{RDP}) if for every $x_1,x_2,z \in X_+$ with $z \leq x_1+x_2$ there exist $z_1,z_2 \in X_+$ such that $z = z_1+z_2$ with $z_1 \leq x_1$ and $z_2\leq x_2$. The space $X$ has the RDP if and only if for every $x_1,x_2,x_3,x_4\in X$ with $x_1,x_2 \leq x_3,x_4$ there exists $z\in X$ such that $x_1, x_2 \leq z \leq x_3, x_4$.
For standard notations in the case that $X$ is a vector lattice see \c{PosOp}.

For two elements $a,b\in X$ of an ordered vector space $X$ we use the notation $]a,b]:=\left\{x\in X\mid| a<x\leq b\right\}$.
For a set $A\sub X$ we set $A+x:=\left\{a+x \mid| a\in A\right\}$. Moreover, we write $A\leq x$ if and only if for every $a\in A$ we have $a\leq x$.
By \c[Theorem~13.1]{Zaa1} for a set $A\sub X$ such that $\sup A$ exists in $X$ and an element $x\in X$ we have that the supremum $\sup(x + A)$ exists in $X$ and
\begin{equation}\label{Supremum_reinziehen}
x+ \sup A = \sup(x + A).
\end{equation}
Clearly, we can replace the supremum by the infimum.

We call a linear subspace $D$ of an ordered vector space $X$ \emph{order dense} in $X$ if for every $x\in X$ we have \[x = \inf\left\{z\in D \mid| x\leq z\right\},\] see \c[p.~360]{113}. A linear subspace $D$ of $X$ is \emph{majorizing} in $X$ if for every $x\in X$ there exists $d\in D$ with $x\leq d$.
For a subset $M\sub X$ denote the set of all upper bounds of $M$ by $M^u:=\left\{x\in X\mid| \forall m\in M\colon m\leq x\right\}$. Two elements $x,y\in X$ are called \emph{disjoint}, in symbols $x\perp y$, if $\left\{x+y,-x-y\right\}^u = \left\{x-y,-x+y\right\}^u$, for motivation and details see \c{1}. For a subset $M\sub X$ define $M^{\t{d}}:=\left\{x\in X\mid| \forall m\in M\colon m\perp x\right\}$. If $X$ is a vector lattice, then this notion of disjointness coincides with the usual one, see \c[Theorem~1.4(4)]{PosOp}. Let $Y$ be an ordered vector space, $X$ an order dense subspace of $Y$, and $x,y\in X$. Then the disjointness notions in $X$ and $Y$ coincide, i.e.\ $x\perp y$ in $X$ holds if and only if $x\perp y$ in $Y$, see \c[Proposition~2.1(ii)]{1}.

We say that a linear subspace $D$ of a vector lattice $X$ \emph{generates $X$ as a vector lattice} if for every $x\in X$ there exist finite sets $A, B\sub D$ such that $x = \bigvee A - \bigvee B$. 
Recall that a linear map $i\colon X\rightarrow Y$, where $X$ and $Y$ are ordered vector spaces, is called \emph{bipositive} if for every $x\in X$ one has $i(x) \geq 0$ if and only if $x\geq 0$. An \emph{embedding} is a bipositive linear map, which implies injectivity. 
For an ordered vector space $X$, the following statements are equivalent, see \c[Corollaries~4.9-4.11 and Theorems~3.7, 4.13]{vanHaa}:
\begin{enumerate}
\item\label{embedding.it1} There exist a vector lattice $Y$ and an embedding $i\colon X\rightarrow Y$ such that $i(X)$ is order dense in $Y$.
\item\label{embedding.it2} There exist a vector lattice $\tilde{Y}$ and an embedding $i\colon X\rightarrow \tilde{Y}$ such that $i(X)$ is order dense in $\tilde{Y}$ and generates $\tilde{Y}$ as a vector lattice.
\end{enumerate}
If $X$ satisfies \ref{embedding.it1}, then $X$ is called a \emph{pre-Riesz space}, and $(Y,i)$ is called a \emph{vector lattice cover} of $X$. For an intrinsic definition of pre-Riesz spaces see \c{vanHaa}. If $X$ is a subspace of $Y$ and $i$ is the inclusion map, we write briefly $Y$ for $(Y,i)$.
As all spaces $\tilde{Y}$ in \ref{embedding.it2} are Riesz isomorphic, we call the pair $(\tilde{Y},i)$ the \emph{Riesz completion of} $X$ and denote it by $X^\rho$. 
The space $X^\rho$ is the smallest vector lattice cover of $X$ in the sense that every vector lattice cover $Y$ of $X$ contains a Riesz subspace that is Riesz isomorphic to $X^\rho$.
By definition, for every $y\in X^\rho$ there are finite sets $A, B\sub X$ such that
\begin{equation}\label{representation}
y= \bigvee i(A) - \bigvee i(B).
\end{equation}
\begin{lemma}\label{properties.2}
Let $X$ be a pre-Riesz space and $X^\rho$ its Riesz completion. Let $y\in X^\rho$. Then in \eqref{representation} the sets $A$ and $B$ can be chosen to be contained in $X_+$.
\end{lemma}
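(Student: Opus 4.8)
The plan is to start from \emph{any} representation $y=\bigvee i(A)-\bigvee i(B)$ with finite sets $A,B\sub X$, which exists by \eqref{representation}, and to translate both $A$ and $B$ by one common element $u\in X$ chosen large enough to push every member of $A\cup B$ into $X_+$. Both suprema then move by the same vector $i(u)$, so their difference, namely $y$, is left unchanged, while the translated sets lie in $X_+$.

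First I would produce the translation vector. Recall that every pre-Riesz space is directed (indeed $i(X)$, being order dense, is majorizing in the vector lattice $X^\rho$, and a majorizing subspace of a vector lattice is directed). Hence the finite set $\{-a : a\in A\}\cup\{-b : b\in B\}$ admits an upper bound $u\in X$. Put $A':=A+u$ and $B':=B+u$. For every $a\in A$ we have $u\geq-a$, hence $a+u\in X_+$, and similarly $b+u\in X_+$ for every $b\in B$; thus $A',B'\sub X_+$.

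It remains to check that $A'$ and $B'$ still represent $y$. By linearity of $i$ one has $i(A')=\{i(a)+i(u):a\in A\}$, and, since $\sup i(A)=\bigvee i(A)$ exists in $X^\rho$, formula \eqref{Supremum_reinziehen} yields
\[
\bigvee i(A')=i(u)+\bigvee i(A),\qquad \bigvee i(B')=i(u)+\bigvee i(B).
\]
Subtracting the second equality from the first cancels $i(u)$, giving $\bigvee i(A')-\bigvee i(B')=\bigvee i(A)-\bigvee i(B)=y$, which is the required representation with $A',B'\sub X_+$.

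I do not expect a real obstacle: the whole argument is a single simultaneous translation. The only points that deserve to be spelled out are that the shift $u$ can be found \emph{inside} $X$ (this is exactly where directedness of the pre-Riesz space is used) and that the two finite suprema transform according to \eqref{Supremum_reinziehen} rather than being manipulated by hand.
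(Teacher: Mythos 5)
Your proof is correct, and it takes a genuinely (if subtly) different route from the paper's. The paper picks a common upper bound $x\in X_+$ of $\tilde A\cup\tilde B$ and replaces the two sets by the \emph{reflections} $A:=x-\tilde B$ and $B:=x-\tilde A$ (note the swap of roles), via
\[
y=\left(i(x)-\bigvee i(\tilde B)\right)-\left(i(x)-\bigvee i(\tilde A)\right)=\bigvee i(x-\tilde B)-\bigvee i(x-\tilde A),
\]
claiming the last equality from \eqref{Supremum_reinziehen}. That step is delicate: \eqref{Supremum_reinziehen} gives $c+\sup S=\sup(c+S)$, whereas $c-\sup S$ equals $\inf(c-S)$, so $i(x)-\bigvee i(\tilde B)$ is the \emph{infimum} $\bigwedge i(x-\tilde B)$, not the supremum (already in $\mathbb R^2$ with $\tilde B=\{(1,0),(0,1)\}$ and $x=(1,1)$ the two differ). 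Your simultaneous translation $A\mapsto A+u$, $B\mapsto B+u$ by a common upper bound $u\in X$ of $-A\cup-B$ uses \eqref{Supremum_reinziehen} exactly in the form in which it holds, both suprema shift by $i(u)$ and cancel in the difference, and the translated sets land in $X_+$. So you reach the same goal by a translation where the paper uses a reflection, and the translation is the cleaner and more robust way to do it; the only inputs you need, directedness of a pre-Riesz space (extended from pairs to finite sets) and \eqref{Supremum_reinziehen}, are both justified correctly in your write-up.
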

\begin{proof}
Let $y\in X^\rho$ have a representation as in \eqref{representation}, i.e.\ $y= \bigvee i(\tilde{A}) - \bigvee i(\tilde{B})$ with $\tilde{A}, \tilde{B}\sub X$.
Since the pre-Riesz space $X$ is directed there exists $x\in X$ with $0\leq x$, $\tilde{A}\leq x$ and $\tilde{B}\leq x$. Then
\begin{align*}
y &= \bigvee i(\tilde{A}) - \bigvee i(\tilde{B})
= \left(i(x) - \bigvee i(\tilde{B})\right) - \left(i(x) - \bigvee i(\tilde{A})\right) =\\
&= \bigvee i(x-\tilde{B}) - \bigvee i(x-\tilde{A}),
\end{align*}
where the last equality follows by \eqref{Supremum_reinziehen}. 
Clearly, for $A:= x-\tilde{B}$ and $B:=x-\tilde{A}$ we have $A,B\sub X_+$.
\end{proof}

By \c[Theorem 17.1]{vanHaa} every Archimedean directed ordered vector space is a pre-Riesz space. Moreover, every pre-Riesz space is directed.
If $X$ is an Archimedean directed ordered vector space, then every vector lattice cover of $X$ is Archimedean.  

Let $X$ be a pre-Riesz space and $(Y,i)$ a vector lattice cover of $X$.
The space $X$ is called \emph{pervasive in} $Y$ if for every $y\in Y_+$, $y\neq 0$, there exists $x\in X$ such that $0<i(x) \leq y$. By \c[Proposition~2.8.8]{PRS} the space $X$ is pervasive in $Y$ if and only if $X$ is pervasive in any vector lattice cover. Then $X$ is simply called \emph{pervasive}.
The space $X$ is called \emph{fordable in} $Y$ if for every $y\in Y$ there exists a set $S\sub X$ such that $\left\{y\right\}^{\t{d}} = i(S)^{\t{d}}$ in $Y$. By \c[Proposition~4.1.18]{PRS} the space $X$ is fordable in $Y$ if and only if $X$ is fordable in any vector lattice cover of $X$. Then $X$ is simply called \emph{fordable}. By \c[Lemma~2.4]{3} every pervasive pre-Riesz space is fordable.

\section{Intrinsic characterizations of pervasiveness}\label{intrinsic}
In a pre-Riesz space, the definition of pervasiveness depends on the Riesz completion or a vector lattice cover. An intrinsic definition allows to check whether a space is pervasive without having to compute a vector lattice cover. In Theorems~\ref{properties.5} and \ref{properties.2b} we give two distinct intrinsic characterizations.
We start with a recollection of two (non-intrinsic) characterizations of pervasiveness, which we include for the sake of completeness. 
For the following result from \cite[Theorem~4.15, Corollary~4.16]{Waaij} a short proof can also be found in \c[Lemma~1]{02}.
\begin{proposition}\label{0.0}
Let $X$ be an Archimedean pre-Riesz space and $(Y,i)$ a vector lattice cover of $X$. Then the following statements are equivalent.
\begin{enumerate}
\item\label{properties.1.it1} $X$ is pervasive.
\item\label{properties.1.it2} $\forall a\in X\hs\forall y\in Y\hs \big(i(a)<y \Rightarrow \exists x\in X\colon i(a)<i(x)\leq y\big)$.
\item\label{properties.1.it3} For every $y\in Y_+$ with $y\neq 0$ we have $y = \sup\left(i(X)\cap\left]0, y\right]\right)$.
\item\label{properties.1.it4} For every $y\in Y$ and $z\in X$ with $i(z)< y$ we have
$y = \sup\left(i(X)\cap\left]i(z),y\right]\right)$.
\end{enumerate}
\end{proposition}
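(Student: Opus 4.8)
The plan is to run the cycle of implications $(i)\Rightarrow(ii)\Rightarrow(iii)\Rightarrow(iv)\Rightarrow(i)$; three of the four steps are routine, and the weight of the argument sits in $(ii)\Rightarrow(iii)$, which is the only place where the Archimedean hypothesis is used (through the fact that every vector lattice cover $Y$ of $X$ is Archimedean). For $(i)\Rightarrow(ii)$ I would note that $i(a)<y$ gives $y-i(a)\in Y_+\setminus\{0\}$, so pervasiveness provides $x'\in X$ with $0<i(x')\leq y-i(a)$, and then $x:=a+x'$ satisfies $i(a)<i(a)+i(x')=i(x)\leq y$. (It is worth recording that $(ii)\Rightarrow(i)$ is immediate by specializing $a=0$, although this is not needed for the cycle.)

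For the crucial step $(ii)\Rightarrow(iii)$, fix $y\in Y_+$ with $y\neq 0$ and set $S:=i(X)\cap\,]0,y]$. Specializing $(ii)$ to $a=0$ shows $S\neq\leer$, and $y$ is plainly an upper bound of $S$, so I would take an arbitrary $u\in S^u$ and show $y\leq u$. Since $S$ contains a strictly positive element, $u>0$, hence $u\wedge y$ exists in the vector lattice $Y$ with $0\leq u\wedge y\leq y$ and $u\wedge y\in S^u$; it thus suffices to prove $u\wedge y=y$. Assume not, so that $v:=y-(u\wedge y)>0$, and note $v\leq y$ since $u\wedge y\geq 0$. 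Using $(ii)$ with $a=0$, choose $x_1\in X$ with $0<i(x_1)\leq v$. The key claim is that $n\,i(x_1)\leq u\wedge y$ for all $n\in\NN$: for $n=1$ this holds because $0<i(x_1)\leq v\leq y$ places $i(x_1)$ in $S$; and if $n\,i(x_1)\leq u\wedge y$, then $(n+1)\,i(x_1)=n\,i(x_1)+i(x_1)\leq(u\wedge y)+v=y$, so $(n+1)\,i(x_1)$ is once more a strictly positive element of $i(X)$ lying below $y$, hence belongs to $S$ and is $\leq u\wedge y$. As $Y$ is Archimedean, $n\,i(x_1)\leq u\wedge y$ for all $n$ forces $i(x_1)\leq 0$, contradicting $i(x_1)>0$. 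Therefore $u\wedge y=y$, so $y\leq u$, and $y=\sup S$.

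To close the cycle, for $(iii)\Rightarrow(iv)$ I would take $y\in Y$ and $z\in X$ with $i(z)<y$, apply $(iii)$ to $y-i(z)\in Y_+\setminus\{0\}$ to obtain $y-i(z)=\sup\!\big(i(X)\cap\,]0,y-i(z)]\big)$, and then add $i(z)$ and use \eqref{Supremum_reinziehen}, which gives $y=\sup\!\big(i(z)+(i(X)\cap\,]0,y-i(z)])\big)$; the bijection $x\mapsto z+x$ of $X$ identifies $i(z)+(i(X)\cap\,]0,y-i(z)])$ with $i(X)\cap\,]i(z),y]$, which yields $(iv)$. Finally, for $(iv)\Rightarrow(i)$, given $y\in Y_+\setminus\{0\}$ one has $i(0)=0<y$, so $(iv)$ with $z=0$ gives $y=\sup\!\big(i(X)\cap\,]0,y]\big)$; in particular $i(X)\cap\,]0,y]$ is nonempty, which is exactly pervasiveness. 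I expect the only genuinely delicate point to be the induction in $(ii)\Rightarrow(iii)$: pervasiveness delivers just one small positive element below $v$, but closure of $S$ under the operation ``add $i(x_1)$ while the result stays $\leq y$'' lets that single element be dilated without bound inside the order-bounded set $\{w\in Y:w\leq u\wedge y\}$, which an Archimedean $Y$ rules out; everything else is bookkeeping with lattice operations in $Y$ and with \eqref{Supremum_reinziehen}.
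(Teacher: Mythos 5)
Your proof is correct. The paper does not prove Proposition~\ref{0.0} itself but quotes it from the cited literature; your cycle of implications, with the Archimedean iteration $n\,i(x_1)\leq u\wedge y$ carrying the weight in the step from \ref{properties.1.it2} to \ref{properties.1.it3} and the translation argument via \eqref{Supremum_reinziehen} for \ref{properties.1.it4}, is essentially the standard argument found in those sources, so there is nothing to add.
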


The following result was established in \c[Theorem~26]{03}.
\begin{proposition}\label{properties.28neu}
Let $X$ be an Archimedean pre-Riesz space, $Y$ a vector lattice and $i\colon X\rightarrow Y$ a bipositive linear map. Then the following statements are equivalent.
\begin{enumerate}
\item\label{properties.28neu.it1} $i(X)$ is order dense in $Y$ and $X$ is pervasive.
\item\label{properties.28neu.it2} $i(X)$ is majorizing in $Y$, and for every $y\in Y_+$ there is $A\sub i(X)_+$ such that $ y=\sup A$.
\end{enumerate}
\end{proposition}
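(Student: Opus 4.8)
\emph{The plan.} I would prove the two implications separately: \ref{properties.28neu.it1}$\Rightarrow$\ref{properties.28neu.it2} by invoking Proposition~\ref{0.0}, and \ref{properties.28neu.it2}$\Rightarrow$\ref{properties.28neu.it1} by deriving order density directly with the help of the translation formula \eqref{Supremum_reinziehen}.

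\emph{First implication.} Under \ref{properties.28neu.it1} the pair $(Y,i)$ is a vector lattice cover of the Archimedean pre-Riesz space $X$, so Proposition~\ref{0.0} is available. Since order-dense subspaces of ordered vector spaces are majorizing, $i(X)$ is majorizing in $Y$. For the supremum part, fix $y\in Y_+$: if $y=0$, take $A=\{0\}$; if $y\neq 0$, take $A:=i(X)\cap\left]0,y\right]$, which is contained in $i(X)_+$ and has supremum $y$ by Proposition~\ref{0.0}\ref{properties.1.it3}.

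\emph{Second implication.} I would first show that $i(X)$ is order dense in $Y$. Fix $y\in Y$. Using that $i(X)$ is majorizing, pick $u\in i(X)$ with $u\geq y$; then $u-y\in Y_+$, so by \ref{properties.28neu.it2} there is $A\sub i(X)_+$ with $\sup A=u-y$. For each $a\in A$ one has $0\leq a\leq u-y$, hence $y\leq u-a$, and $u-a\in i(X)$; thus $\left\{u-a\mid| a\in A\right\}\sub\left\{z\in i(X)\mid| y\leq z\right\}$. By \eqref{Supremum_reinziehen} applied to $-A$ and to infima, $\inf\left\{u-a\mid| a\in A\right\}=u-\sup A=y$. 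Since $y$ is a lower bound of $\left\{z\in i(X)\mid| y\leq z\right\}$, which contains a subset with infimum $y$, we conclude $y=\inf\left\{z\in i(X)\mid| y\leq z\right\}$, i.e.\ $i(X)$ is order dense in $Y$. Hence $(Y,i)$ is a vector lattice cover of $X$. It remains to check pervasiveness: let $y\in Y_+$ with $y\neq 0$ and take $A\sub i(X)_+$ with $\sup A=y$. As $y\neq 0$, the set $A$ is not contained in $\{0\}$ (otherwise $0$ would be an upper bound of $A$, forcing $y=\sup A\leq 0$ and hence $y=0$); choosing $a_0\in A$ with $a_0\neq 0$ and writing $a_0=i(x_0)$, $x_0\in X$, we get $0<i(x_0)\leq\sup A=y$, so $X$ is pervasive.

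\emph{Main obstacle.} No step is genuinely difficult; the one requiring a moment's thought is the derivation of order density in the second implication, where the idea is to translate the positive approximating family $A$ by a majorant $u\in i(X)$, so that \eqref{Supremum_reinziehen} converts ``$\sup A=u-y$'' into ``$\inf(u-A)=y$'' with $u-A$ lying in $i(X)$ above $y$ --- this is the crux of passing from the supremum condition in \ref{properties.28neu.it2} to order density in \ref{properties.28neu.it1}.
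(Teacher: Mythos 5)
The paper does not prove this proposition; it is quoted from [Theorem~26] of the reference ``Vector lattice covers of ideals and bands in pre-Riesz spaces'', so there is no in-paper argument to compare against. Your proof is correct and self-contained: the forward direction is an immediate consequence of Proposition~\ref{0.0}\ref{properties.1.it3} together with the fact that order density implies majorization, and in the converse direction the translation trick $\inf(u-A)=u-\sup A$ via \eqref{Supremum_reinziehen} correctly converts the supremum condition into order density, after which pervasiveness follows by picking a nonzero element of $A$.
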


For our first intrinsic characterization of pervasiveness we begin with the following technical statement.
\begin{lemma}\label{properties.3}
Let $X$ be a pre-Riesz space and $y\in X^\rho$. Then
\begin{enumerate}
\item\label{properties.3.it1} $y\geq 0$ if and only if for every two finite sets $A,B\sub X_+$ with $y=\bigvee i(A) - \bigvee i(B)$ we have
$A^u\sub B^u$.
\item\label{properties.3.it2} $y>0$ if and only if for every two finite sets $A,B\sub X_+$ with $y=\bigvee i(A) - \bigvee i(B)$ we have
$A^u\subsetneq B^u$.
\end{enumerate}
\end{lemma}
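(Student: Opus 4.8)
The key observation is that in the vector lattice $X^\rho$, for finite sets $A,B\subseteq X_+$ the difference $\bigvee i(A)-\bigvee i(B)$ is nonnegative precisely when $\bigvee i(B)\leq\bigvee i(A)$, which — because $i(X)$ is order dense, hence majorizing, and by the standard rule for suprema — should translate into a condition comparing $A^u$ and $B^u$ computed in $X$. The plan is to exploit the identity $\bigvee i(B)\leq\bigvee i(A)$ $\iff$ every upper bound in $X^\rho$ of $i(A)$ is an upper bound of $i(B)$, and then use order density of $i(X)$ in $X^\rho$ to replace ``upper bounds in $X^\rho$'' by ``upper bounds in $X$'': an element $x\in X$ satisfies $A\leq x$ iff $i(A)\leq i(x)$ iff $\bigvee i(A)\leq i(x)$, and since $i(X)$ is majorizing, $\bigvee i(A)\leq\bigvee i(B)$ in $X^\rho$ is equivalent to the statement that every such $i(x)$ dominating $i(A)$ also dominates $i(B)$. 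This gives the equivalence $y\geq 0\iff B^u\subseteq A^u$; I must be careful about the direction of the inclusion — a larger set of upper bounds corresponds to a smaller supremum — so $\bigvee i(B)\leq\bigvee i(A)$ yields $A^u\subseteq B^u$, matching the claim.

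For part \ref{properties.3.it1}, I would argue as follows. First fix one representation $y=\bigvee i(A_0)-\bigvee i(B_0)$ with $A_0,B_0\subseteq X_+$, which exists by Lemma~\ref{properties.2}. If $y\geq 0$, then $\bigvee i(B)\leq\bigvee i(A)$ for every representation with $A,B\subseteq X_+$ (using \eqref{Supremum_reinziehen} to shift: $y\geq 0$ means $\bigvee i(A)\geq\bigvee i(B)$ directly). Now take any $x\in A^u$, so $i(A)\leq i(x)$, hence $\bigvee i(A)\leq i(x)$, hence $\bigvee i(B)\leq i(x)$, hence $i(B)\leq i(x)$, hence $x\in B^u$ by bipositivity; thus $A^u\subseteq B^u$. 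Conversely, suppose $A^u\subseteq B^u$ holds for every representation; applying it to the fixed representation $(A_0,B_0)$, I want to deduce $\bigvee i(A_0)\geq\bigvee i(B_0)$, i.e.\ $y\geq 0$. Here I use that $i(X)$ is majorizing and order dense in $X^\rho$: the element $\bigvee i(A_0)$ is the infimum over $z\in i(X)$ with $z\geq\bigvee i(A_0)$, i.e.\ over $i(x)$ with $x\in A_0^u$. Since $A_0^u\subseteq B_0^u$, every such $i(x)$ also satisfies $i(x)\geq i(B_0)$, hence $i(x)\geq\bigvee i(B_0)$; taking the infimum over $x\in A_0^u$ gives $\bigvee i(A_0)\geq\bigvee i(B_0)$, so $y\geq 0$.

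For part \ref{properties.3.it2}, note $y>0$ iff $y\geq 0$ and $y\neq 0$. Given part \ref{properties.3.it1}, it remains to characterize $y\neq 0$ (granted $y\geq 0$) as: for every representation $A^u\neq B^u$, equivalently $A^u\subsetneq B^u$. If $y=0$ then $\bigvee i(A)=\bigvee i(B)$, so the argument of part \ref{properties.3.it1} applied in both directions gives $A^u=B^u$, so $A^u\subsetneq B^u$ fails. Conversely, if $y>0$ I must produce, for each representation $(A,B)$ with $A,B\subseteq X_+$, a point of $B^u\setminus A^u$; since $y>0$ means $\bigvee i(A)>\bigvee i(B)$ strictly, there is $v\in X^\rho$ with $\bigvee i(B)\leq v$ but $\bigvee i(A)\not\leq v$, and using order density (majorizing) I can push $v$ up to some $i(x)$ with $x\in X$ that still dominates $\bigvee i(B)$ — e.g.\ take $i(x)\geq v\vee\bigvee i(B)$ chosen so that it still fails to dominate $\bigvee i(A)$; the existence of such an $x$ needs a small argument, essentially that if every $i(x)$ dominating $\bigvee i(B)$ also dominated $\bigvee i(A)$, then by the infimum characterization $\bigvee i(B)\geq\bigvee i(A)$, contradicting strictness.

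The main obstacle I anticipate is the converse direction in part \ref{properties.3.it2}: turning strict inequality $\bigvee i(A)>\bigvee i(B)$ in the vector lattice $X^\rho$ into the existence of a concrete element of $X$ lying in $B^u\setminus A^u$. The safe route is contrapositive: if $B^u\subseteq A^u$ and also (by part~\ref{properties.3.it1}, which gives $A^u\subseteq B^u$ from $y\geq 0$) $A^u\subseteq B^u$, then $A^u=B^u$, and the infimum-over-$i(X)$ characterization forces $\bigvee i(A)=\bigvee i(B)$, i.e.\ $y=0$; contrapositively, $y>0$ forces $B^u\not\subseteq A^u$, and combined with $A^u\subseteq B^u$ from $y\geq 0$ this is exactly $A^u\subsetneq B^u$. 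This packaging avoids any explicit construction and relies only on Lemma~\ref{properties.2}, order density, the majorizing property, and \eqref{Supremum_reinziehen}.
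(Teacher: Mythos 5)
Your proposal is correct and follows essentially the same route as the paper: part \ref{properties.3.it1} via the order-density identity $\bigvee i(A)=\inf i(A^u)$ (the paper phrases this as $i(A)^u\sub i(B)^u$ for upper bounds in $X^\rho$, which is the same computation), and part \ref{properties.3.it2} by reducing to $y\geq 0$ plus $y\neq 0$ and arguing by contradiction that $A^u=B^u$ would force $\bigvee i(A)=\bigvee i(B)$. The explicit construction you worried about is indeed unnecessary, and the contrapositive packaging you settle on is exactly what the paper does.
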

\begin{proof}
\ref{properties.3.it1}: Let $y\in X^\rho$ with $y\geq 0$ and let $A,B\sub X_+$ be finite sets such that $y=\bigvee i(A) - \bigvee i(B)$. The existence of such sets is guaranteed by Lemma~\ref{properties.2}. Then we have $\bigvee i(B) \leq \bigvee i(A)$. If $v\in X^\rho$ is an upper bound of $i(A)$, then it is an upper bound of $i(B)$, i.e.\ $i(A)^u\sub i(B)^u$. It follows $A^u=[i(A)^u]i \sub [i(B)^u]i = B^u$.

Let, on the other hand, $y\in X^\rho$ and let $A,B\sub X_+$ be finite sets with $y=\bigvee i(A) - \bigvee i(B)$ and $A^u\sub B^u$. We show $i(A)^u\sub i(B)^u$. Let $v\in i(A)^u$. Due to $X$ being order dense in $X^\rho$, for the set $V:=\left\{i(x)\mid|x\in X,\hs v\leq i(x)\right\}\sub i(X)$ we have $v=\inf V$ in $X^\rho$. For every $x\in X$ with $i(x)\in V$ from $i(A)\leq v\leq i(x)$  we obtain $A\leq x$, i.e.\ $x\in A^u\sub B^u$. It follows $i(x)\in i(B)^u$, i.e.\ $i(B)\leq i(x)$. As this is true for every $x\in X$ with $i(x)\in V$, it follows $i(B)\leq \inf V =v$. That is, $v\in i(B)^u$. We obtain $i(A)^u\sub i(B)^u$, from which we conclude $\bigvee i(B) \leq \bigvee i(A)$ and thus $y\geq 0$.

\ref{properties.3.it2}: Let $y\in X^\rho$ with $y> 0$ and let $A,B\sub X_+$ be finite sets such that $y=\bigvee i(A) - \bigvee i(B)$. Then $y\geq 0$, thus \ref{properties.3.it1} implies $A^u= B^u$. We show $A^u\neq B^u$ by contradiction. Assume, on the contrary, that $A^u= B^u$. Due to $y>0$ we have $t:=\bigvee i(B) < \bigvee i(A)=:s$. Since $X$ is order dense in $X^\rho$ it follows
\begin{align*}
t=&\inf\left\{i(x)\mid| x\in X,\hs t\leq i(x)\right\}=\inf \left\{i(x)\mid| x\in X,\hs A\leq x\right\}=\inf i(A^u)=\\
=&\inf i(B^u)=\inf\left\{i(x)\mid| x\in X,\hs s\leq i(x)\right\}=s,
\end{align*}
a contradiction. We conclude that $A^u\subsetneq B^u$.

Let, on the other hand, $y\in X^\rho$ and let $A,B\sub X_+$ be finite sets with $y=\bigvee i(A) - \bigvee i(B)$ and $A^u\subsetneq B^u$. Then $A^u\sub B^u$ and \ref{properties.3.it1} imply $y\geq 0$. We show $y>0$ by contradiction. Let, on the contrary, $y=0$. Then $t:=\bigvee i(B) =\bigvee i(A)=:s$. For a fixed $x\in X$ the statement $A\leq x$ is equivalent to $\bigvee i(A)\leq i(x)$, and similarly, $B\leq x$ is equivalent to $\bigvee i(B)\leq i(x)$. It follows
\begin{align*}
A^u=&\left\{x\in X\mid| A\leq x\right\}=\left\{x\in X\mid| t\leq i(x)\right\}=\left\{x\in X\mid| s\leq i(x)\right\}=\\
=&\left\{x\in X\mid| B\leq x\right\}=B^u,
\end{align*}
a contradiction to $A^u\subsetneq B^u$. We conclude that $y>0$.
\end{proof}

\begin{theorem}\label{properties.5}
Let $X$ be a pre-Riesz space. Then $X$ is pervasive if and only if for every two finite sets $A, B \sub X_+$ with $A^u\subsetneq B^u$ there exists $x\in X$, $x> 0$, such that $A^u\sub (x+B)^u$.
\end{theorem}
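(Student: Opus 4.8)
The plan is to carry out the whole argument inside the Riesz completion $(X^\rho,i)$, used as a concrete vector lattice cover; this is legitimate because, as recalled in the preliminaries, pervasiveness of $X$ is equivalent to pervasiveness of $X$ in $X^\rho$. So it suffices to show that the stated condition is equivalent to: for every $y\in X^\rho$ with $y>0$ there is $x\in X$ with $0<i(x)\le y$. The key device is a dictionary between order inequalities in $X^\rho$ and inclusions of upper‑bound sets in $X$, provided by Lemma~\ref{properties.3}. Concretely, for finite sets $A,B\sub X_+$ put $y:=\bigvee i(A)-\bigvee i(B)\in X^\rho$; for any $x\in X$ with $x>0$ one has $x+B\sub X_+$, and by \eqref{Supremum_reinziehen}
\[
y-i(x)=\bigvee i(A)-\bigl(\bigvee i(B)+i(x)\bigr)=\bigvee i(A)-\bigvee i(x+B),
\]
so, applying Lemma~\ref{properties.3}\ref{properties.3.it1} to this representation of $y-i(x)$, the inequality $i(x)\le y$ is equivalent to $A^u\sub(x+B)^u$. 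Since $i$ is bipositive, $i(x)>0$ is equivalent to $x>0$.

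For the forward implication I would argue as follows. Given finite sets $A,B\sub X_+$ with $A^u\subsetneq B^u$, set $y:=\bigvee i(A)-\bigvee i(B)$; by Lemma~\ref{properties.3}\ref{properties.3.it2} the strict inclusion forces $y>0$. Pervasiveness (in $X^\rho$) then produces $x\in X$ with $0<i(x)\le y$, hence $x>0$, and the dictionary above rewrites $i(x)\le y$ as $A^u\sub(x+B)^u$, which is exactly the required conclusion.

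For the converse I would start from an arbitrary $y\in X^\rho$ with $y>0$ and use Lemma~\ref{properties.2} to write $y=\bigvee i(A)-\bigvee i(B)$ with $A,B\sub X_+$ finite. Lemma~\ref{properties.3}\ref{properties.3.it2} then gives $A^u\subsetneq B^u$, so the hypothesis yields $x\in X$ with $x>0$ and $A^u\sub(x+B)^u$. The dictionary converts this into $i(x)\le y$, and $x>0$ gives $i(x)>0$, so $0<i(x)\le y$. As $y>0$ was arbitrary in $X^\rho$, the space $X$ is pervasive in $X^\rho$, hence pervasive.

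I do not anticipate a genuine obstacle: once Lemma~\ref{properties.3} is available the whole proof is bookkeeping around the identity displayed above. The one point that needs care is that Lemma~\ref{properties.3} requires the generating sets to lie in $X_+$, which is precisely why the extracted element must be taken strictly positive (so that $x+B\sub X_+$) rather than merely nonzero — and this is exactly the hypothesis $x>0$ that appears in the statement.
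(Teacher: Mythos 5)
Your proposal is correct and follows essentially the same route as the paper's proof: both directions rest on Lemma~\ref{properties.2}, the two parts of Lemma~\ref{properties.3}, and the identity $y-i(x)=\bigvee i(A)-\bigvee i(x+B)$ obtained from \eqref{Supremum_reinziehen}. The only cosmetic difference is that you package the translation between $i(x)\leq y$ and $A^u\sub(x+B)^u$ as an explicit ``dictionary'' up front, whereas the paper applies Lemma~\ref{properties.3}\ref{properties.3.it1} separately in each direction.
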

\begin{proof}
``$\Rightarrow$'': Let $X$ be pervasive and let $(X^\rho,i)$ be the Riesz completion of $X$. Let $A,B\sub X_+$ be finite sets with $A^u\subsetneq B^u$. Set $y:=\bigvee i(A) - \bigvee i(B)$. Then by Lemma~\ref{properties.3}\ref{properties.3.it2} it follows $y>0$. As $X$ is pervasive, there exists an $x\in X$ with $0<i(x)\leq y$. Hence
\[0\leq y -i(x) = \bigvee i(A) - \left(\bigvee i(B) +i(x)\right) = \bigvee i(A) - \left(\bigvee i(B+x)\right).\]
Clearly, $B+x \sub X_+$, therefore by Lemma~\ref{properties.3}\ref{properties.3.it1} we obtain $A^u\sub (x+B)^u$.

``$\Leftarrow$'': Let for every two finite sets $A, B \sub X_+$ with $A^u\subsetneq B^u$ there exist $x\in X_+$, $x\neq 0$, such that $A^u\sub (x+B)^u$. Let $y\in X^\rho$ with $y>0$. Then by Lemma~\ref{properties.2} there exist finite sets $A, B\sub X_+$ such that $y$ has the representation
$y = \bigvee i(A) - \bigvee i(B)$. As $y>0$, by Lemma~\ref{properties.3}\ref{properties.3.it2} it follows $A^u\subsetneq B^u$. By assumption there exists $x\in X$, $x>0$, such that $A^u\sub (x+B)^u$. Lemma~\ref{properties.3}\ref{properties.3.it1} yields $0\leq y-i(x)$, i.e.\ $X$ is pervasive.
\end{proof}

Next we intend to give a characterization of pervasiveness using pairs of elements instead of finite subsets.
\begin{proposition}\label{properties.6x}
Let $X$ be a pre-Riesz space and $(Y,i)$ a vector lattice cover of $X$. Then the following are equivalent.
\begin{enumerate}
	\item\label{properties.6x.it1} $X$ is pervasive.
	\item\label{properties.6x.it2} For every $b_1, b_2\in X$ with $i(b_1)\Sup i(b_2)> 0$ there exists $x\in X$ such that $0 < i(x) \leq i(b_1)\Sup i(b_2)$.
	\item\label{properties.6x.it3} For every $b\in X$ with $i(b)\Sup 0> 0$ there exists $x\in X$ such that $0 < i(x) \leq i(b)\Sup 0$.
\end{enumerate}
\end{proposition}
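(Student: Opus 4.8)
I would establish the cycle \ref{properties.6x.it1}$\Rightarrow$\ref{properties.6x.it2}$\Rightarrow$\ref{properties.6x.it3}$\Rightarrow$\ref{properties.6x.it1}. The first two implications are routine. If $X$ is pervasive and $b_1,b_2\in X$ satisfy $i(b_1)\Sup i(b_2)>0$, then $y:=i(b_1)\Sup i(b_2)$ is a nonzero element of $Y_+$, so the definition of pervasiveness produces $x\in X$ with $0<i(x)\le y$; this is \ref{properties.6x.it2}. And \ref{properties.6x.it3} is literally the instance $b_2:=0$ of \ref{properties.6x.it2}, since $i(0)=0$ forces $i(b_1)\Sup i(b_2)=i(b_1)\Sup 0$.

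All the work sits in \ref{properties.6x.it3}$\Rightarrow$\ref{properties.6x.it1}, and I expect the main obstacle to be resisting the ``obvious'' route. The tempting approach is to pass to $X^\rho$, take $y>0$, write $y=\bigvee i(A)-\bigvee i(B)$ with $A,B\sub X_+$ by Lemma~\ref{properties.2}, isolate one summand $\bigwedge_{b\in B}i(a_0-b)$ whose positive part is already $>0$ and $\le y$, and then descend below this finite infimum by induction on $|B|$. This seems to fail: hypothesis \ref{properties.6x.it3} only yields \emph{some} element of $i(X)$ below a single $i(a_0-b)\Sup 0$, and there is no reason for it to be comparable with the remaining factors of the infimum, so the induction does not close. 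The resolution is to bypass the representation \eqref{representation} entirely and use order density directly.

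Concretely: let $y\in Y_+$ with $y\neq 0$; I must produce $x\in X$ with $0<i(x)\le y$. Since $i(X)$ is order dense in $Y$, applying the defining equation of order density to $-y$ and dualizing (via $\inf(-S)=-\sup S$) gives
\[
y=\sup\left\{i(x)\mid| x\in X,\ i(x)\le y\right\}.
\]
If every $x\in X$ with $i(x)\le y$ satisfied $i(x)\le 0$, then $0$ would be an upper bound of the set on the right, forcing $y\le 0$ and contradicting $y>0$. Hence there is $x_0\in X$ with $i(x_0)\le y$ and $i(x_0)\not\le 0$, i.e.\ $i(x_0)\Sup 0>0$. By \ref{properties.6x.it3} there is $x\in X$ with $0<i(x)\le i(x_0)\Sup 0$, and since $i(x_0)\le y$ and $y\ge 0$ we get $i(x_0)\Sup 0\le y\Sup 0=y$; thus $0<i(x)\le y$. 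As $y$ was an arbitrary nonzero element of $Y_+$, the space $X$ is pervasive in $Y$, hence pervasive.

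The only step needing a little care is the dualization of order density in the displayed line, i.e.\ passing from an infimum of upper approximants to a supremum of lower approximants (equivalently, one can argue directly from $-y=\inf\{i(x)\mid| x\in X,\ -y\le i(x)\}$ that not all such $i(x)$ are $\ge 0$). As a cross-check, the same argument runs inside $X^\rho$, which is an order dense Riesz subspace of $Y$ with the same lattice operations, but working in the given cover $Y$ is equally convenient.
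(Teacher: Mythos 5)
Your proposal is correct and follows essentially the same route as the paper: the first two implications are dispatched as routine, and \ref{properties.6x.it3}$\Rightarrow$\ref{properties.6x.it1} uses order density in the dualized form $y=\sup\{z\in i(X)\mid z\le y\}$ to find a lower bound $z=i(x_0)$ of $y$ with $x_0\not\le 0$, then applies \ref{properties.6x.it3} to $i(x_0)\Sup 0\le y$. Your extra remarks on justifying the dualization and on why the representation-based induction would fail are sound but not needed beyond what the paper records.
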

\begin{proof}
The implications \ref{properties.6x.it1}$\Rightarrow$\ref{properties.6x.it2} and \ref{properties.6x.it2}$\Rightarrow$\ref{properties.6x.it3} are clear.

%
\ref{properties.6x.it3}$\Rightarrow$\ref{properties.6x.it1}: Let $y\in Y$ with $y> 0$. Since $X$ is order dense in $Y$, we have $y=\sup\left\{z\in i(X)\mid| z\leq y\right\}$.
Thus there exists a lower bound $z\in i(X)$ of $y$ for which we have $z\not\leq 0$. For $b:=i^{-1}(z)$ it follows $0<i(b)\Sup 0$. By assumption there exists $x\in X$ such that $0<i(x)\leq i(b)\Sup 0$. It follows $0<i(x)\leq i(b)\Sup 0 \leq y$, i.e.\ $X$ is pervasive.
\end{proof}

By reformulating Proposition~\ref{properties.6x} with the help of upper bounds we obtain the following intrinsic characterization of pervasiveness. This characterization is used in Example~\ref{properties.7} below, where we establish that a pre-Riesz space is pervasive without prior computation of its vector lattice cover.
\begin{theorem}\label{properties.2b}
Let $X$ be an Archimedean pre-Riesz space. Then the following are equivalent.
\begin{enumerate}
\item\label{properties.2b.it1} $X$ is pervasive.
\item\label{properties.2b.it2} For every $b_1, b_2\in X$ with $\left\{b_1,b_2\right\}^u\sub X_+\ohne\left\{0\right\}$ there is $x\in X$ such that for every $u\in\left\{b_1,b_2\right\}^u$ we have $0 < x \leq u$.
\item\label{properties.2b.it3} For every $b\in X$ with $b\not\leq 0$ there is $x\in X$ such that for every $u\in X_+$ the inequality $b\leq u$ implies $0 < x \leq u$.
\end{enumerate}
\end{theorem}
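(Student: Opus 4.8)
The plan is to derive Theorem~\ref{properties.2b} from Proposition~\ref{properties.6x} by rewriting its conditions \ref{properties.6x.it2} and \ref{properties.6x.it3} --- which are phrased via a vector lattice cover $(Y,i)$ --- so that only upper bounds taken inside $X$ occur. Fix any vector lattice cover $(Y,i)$ of $X$ (the Archimedean hypothesis is not actually needed for this argument, since pre-Riesz spaces are automatically directed). The bridge between the two formulations is order density of $i(X)$ in $Y$, which lets one express joins in $Y$ as infima of $i$-images of sets of upper bounds coming from $X$.

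Concretely, for $b_1,b_2\in X$ order density of $i(X)$ in $Y$ gives
\[
i(b_1)\Sup i(b_2)=\inf\left\{i(u)\mid| u\in X,\ i(b_1)\Sup i(b_2)\leq i(u)\right\}=\inf i\bigl(\{b_1,b_2\}^u\bigr),
\]
the last equality because $i(u)\geq i(b_1)\Sup i(b_2)$ precisely when $i(u)$ majorizes $\{i(b_1),i(b_2)\}$, i.e.\ when $u\in\{b_1,b_2\}^u$, and this set is nonempty as $X$ is directed. From this formula I would read off, using bipositivity of $i$: (a) for $x\in X$, the condition $0<i(x)\leq i(b_1)\Sup i(b_2)$ is equivalent to $x>0$ together with $x\leq u$ for every $u\in\{b_1,b_2\}^u$ (since $i(x)\leq\inf i(\{b_1,b_2\}^u)$ says exactly that $i(x)$ is a lower bound of that set); (b) $i(b_1)\Sup i(b_2)\geq 0$ is equivalent to $0$ being a lower bound of $i(\{b_1,b_2\}^u)$, i.e.\ to $\{b_1,b_2\}^u\sub X_+$; (c) $i(b_1)\Sup i(b_2)\leq 0$ is equivalent to $i(b_1),i(b_2)\leq 0$, i.e.\ to $0\in\{b_1,b_2\}^u$. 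Combining (b) and (c), $i(b_1)\Sup i(b_2)>0$ holds iff $\{b_1,b_2\}^u\sub X_+$ and $0\notin\{b_1,b_2\}^u$, that is, iff $\{b_1,b_2\}^u\sub X_+\ohne\{0\}$. Substituting these equivalences into Proposition~\ref{properties.6x}\ref{properties.6x.it2} turns it word for word into \ref{properties.2b.it2}, which yields \ref{properties.2b.it1}$\Leftrightarrow$\ref{properties.2b.it2}.

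For \ref{properties.2b.it3} I would run the same argument with $b_2$ replaced by $0$: order density gives $i(b)\Sup 0=\inf\left\{i(u)\mid| u\in X_+,\ b\leq u\right\}$ (again nonempty by directedness), so $0<i(x)\leq i(b)\Sup 0$ is equivalent to $x>0$ and $x\leq u$ for all $u\in X_+$ with $b\leq u$; and since $i(b)\Sup 0\geq 0$ always, $i(b)\Sup 0>0$ holds iff $i(b)\not\leq 0$ iff $b\not\leq 0$. Hence Proposition~\ref{properties.6x}\ref{properties.6x.it3} becomes exactly \ref{properties.2b.it3}, and the theorem follows from the equivalence \ref{properties.6x.it1}$\Leftrightarrow$\ref{properties.6x.it2}$\Leftrightarrow$\ref{properties.6x.it3}. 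The part requiring care is the bookkeeping of the positivity conditions: one must cleanly separate $i(b_1)\Sup i(b_2)>0$ from $i(b_1)\Sup i(b_2)\geq 0$ --- this is precisely what forces the hypothesis $\{b_1,b_2\}^u\sub X_+\ohne\{0\}$ rather than merely $\{b_1,b_2\}^u\sub X_+$ --- and one must remember to invoke directedness of $X$ so that the relevant sets of upper bounds are nonempty and the infima in the order-density formula genuinely exist.
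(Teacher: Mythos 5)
Your proposal is correct and follows essentially the same route as the paper: both reduce Theorem~\ref{properties.2b} to Proposition~\ref{properties.6x} by using order density of $i(X)$ in a vector lattice cover to identify $i(b_1)\Sup i(b_2)$ (resp.\ $i(b)\Sup 0$) with $\inf i\bigl(\{b_1,b_2\}^u\bigr)$ and then transferring the positivity and domination conditions back to $X$ via bipositivity. The only difference is organizational --- you set up the translation once as a dictionary, whereas the paper runs the four implications separately --- and your side remark that the Archimedean hypothesis is not used is consistent with the paper's argument, which likewise only invokes Proposition~\ref{properties.6x} and order density.
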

\begin{proof}
Let $(Y,i)$ be a vector lattice cover of $X$.

\ref{properties.2b.it1}$\Rightarrow$\ref{properties.2b.it2}:
Let $b_1,b_2\in X$ be two elements such that $0<u'$ holds for every upper bound $u'\in\left\{b_1,b_2\right\}^u$.
Since $X$ is order dense in $Y$, this implies
\begin{align*}
i(b_1)\Sup i(b_2) &= \inf\left\{z\in i(X)\mid| i(b_1)\Sup i(b_2)\leq z\right\} = \\
&=\inf\left\{i(u') \mid| u'\in\left\{b_1,b_2\right\}^u\right\} \geq 0.
\end{align*}
Now, if $i(b_1)\Sup i(b_2) =0$, then the element $0$ is an upper bound of $b_1,b_2$. This is a contradiction to the assumption that $u'>0$ for every upper bound $u'\in\left\{b_1,b_2\right\}^u$.
Therefore we have $i(b_1)\Sup i(b_2)>0$. Since $X$ is pervasive, there exists $x\in X$ with $0<i(x)\leq i(b_1)\Sup i(b_2)$. This implies $0<x\leq u$ for every upper bound $u\in\left\{b_1,b_2\right\}^u$.

\ref{properties.2b.it2}$\Rightarrow$\ref{properties.2b.it1}: We  use the characterization in Proposition~\ref{properties.6x}\ref{properties.6x.it2}. Let $b_1,b_2\in X$ be such that $i(b_1)\Sup i(b_2)>0$. We have to show that there exists an element $x\in X$ with $0<i(x)\leq i(b_1)\Sup i(b_2)$. Let $u'\in\left\{b_1,b_2\right\}^u$. Then $b_1,b_2\leq u'$ yields $0< i(b_1)\Sup i(b_2)\leq i(u')$, which implies $0<u'$. By assumption there exists $x\in X$ such that for every $u\in\left\{b_1,b_2\right\}^u$ we have $0<x\leq u$. Due to $X$ being order dense in $Y$ we obtain in $Y$ the inequality
\[0<i(x)\leq \inf\left\{i(u)\mid| u\in X,\hs i(b_1)\Sup i(b_2)\leq i(u)\right\} = i(b_1)\Sup i(b_2).\]
The characterization in  Proposition~\ref{properties.6x}\ref{properties.6x.it2} implies that $X$ is pervasive.

\ref{properties.2b.it1}$\Rightarrow$\ref{properties.2b.it3}: Let $b\in X$ with $b\not\leq 0$. We show
\[\exists x\in X \hs\forall u\in X_+\colon\hs b\leq u\Rightarrow 0<x\leq u.\]
We have $i(b)\Sup 0\geq 0$. If $i(b)\Sup 0 = 0$, then $b\leq 0$, a contradiction. We conclude that $i(b)\Sup 0 >0$. 
Since $X$ is pervasive, we obtain that there exists an $x\in X$ with $0<i(x)\leq i(b)\Sup 0$. That is, for every $u\in X$ with $0,b\leq u$ we have $0<i(x)\leq i(b)\Sup 0 \leq i(u)$. This leads to the inequality $0<x\leq u$.

\ref{properties.2b.it3}$\Rightarrow$\ref{properties.2b.it1}: We use the characterization in Proposition~\ref{properties.6x}\ref{properties.6x.it3}. Let $b\in X$ be such that $i(b)\Sup 0>0$. We have to show that there exists $x\in X$ with $0<i(x)\leq i(b)\Sup 0$.

If $b<0$, then $i(b)\Sup 0\leq 0$, a contradiction. 
We conclude $b\not\leq 0$. By assumption there exists $x\in X$ such that for every $u\in X_+$ with $b\leq u$ we have $0<x\leq u$. Due to $X$ being order dense in $Y$ there exists the infimum in the following inequality:
\begin{align*}
0 &< i(x) \leq \inf\left\{i(u)\mid| u\in X_+ \t{ and } b\leq u\right\} =\\
&= \inf\left\{i(u)\mid|  u\in X \t{ and } 0\Sup i(b)\leq i(u)\right\} = 0\Sup i(b).
\end{align*}
The characterization in Proposition~\ref{properties.6x}\ref{properties.6x.it3} implies that $X$ is pervasive.
\end{proof}

\section{Weakly pervasive pre-Riesz spaces}\label{weakly_pervasive}
In this section we relate pervasiveness to other properties of pre-Riesz spaces.
In Proposition~\ref{properties.6x}\ref{properties.6x.it2} pervasiveness of a pre-Riesz space $X$ is characterized with the aid of the supremum of two elements of $i(X)$. 
The question arises whether the supremum can be replaced by the infimum, which yields the following definition.

\begin{definition}
Let $X$ be a pre-Riesz space and $(Y,i)$ a vector lattice cover of $X$. Then $X$ is called \textbf{weakly pervasive} (in $Y$) if
for every $b_1, b_2\in X$ with $i(b_1)\Inf i(b_2)> 0$ there exists $x\in X$ such that $0 < i(x) \leq i(b_1)\Inf i(b_2)$.
\end{definition}
It is clear that every pervasive pre-Riesz space is weakly pervasive. The converse is not true, see Example~\ref{properties.17} below. 
We first characterize weakly pervasive pre-Riesz spaces.
\begin{lemma}\label{properties.15}
Let $X$ be a pre-Riesz space and $(Y, i)$ a vector lattice cover of $X$. Then the following statements are equivalent.
\begin{enumerate}
\item\label{properties.15.eq0} $X$ is weakly pervasive in $Y$.
\item\label{properties.15.eq2} For every $a, b_1, b_2\in X$ with $i(b_1)\Inf i(b_2)> i(a)$ there exists an element $x\in X$ such that $a < x \leq b_1, b_2$.
\item\label{properties.15.eq1} For every $b_1, b_2\in X$ with $b_1,b_2>0$ and $b_1\not\perp b_2$ there exists an element $x\in X$ such that $0 < x \leq b_1, b_2$.
\end{enumerate}
\end{lemma}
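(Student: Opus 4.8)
The plan is to keep the vector lattice cover $(Y,i)$ fixed and establish the two equivalences \ref{properties.15.eq0}$\Leftrightarrow$\ref{properties.15.eq2} and \ref{properties.15.eq0}$\Leftrightarrow$\ref{properties.15.eq1} directly. Three tools will be used repeatedly: bipositivity of $i$, which lets us pull order relations between elements of $i(X)$ back to $X$; the translation formula \eqref{Supremum_reinziehen} in its infimum form inside the vector lattice $Y$; and the disjointness--infimum dictionary, i.e.\ that $u\perp v$ is equivalent to $|u|\wedge|v|=0$ in a vector lattice (\cite[Theorem~1.4(4)]{PosOp}), together with the fact that disjointness in the order dense subspace $i(X)$ agrees with disjointness in $Y$ (\cite[Proposition~2.1(ii)]{1}).

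For \ref{properties.15.eq2}$\Rightarrow$\ref{properties.15.eq0} one specializes $a=0$: from $i(b_1)\wedge i(b_2)>i(0)$, statement \ref{properties.15.eq2} yields $x\in X$ with $0<x\leq b_1,b_2$, hence $0<i(x)\leq i(b_1)\wedge i(b_2)$. For the converse, given $a,b_1,b_2\in X$ with $i(b_1)\wedge i(b_2)>i(a)$, I would use linearity of $i$ and \eqref{Supremum_reinziehen} to rewrite $i(b_1-a)\wedge i(b_2-a)=(i(b_1)\wedge i(b_2))-i(a)>0$, apply weak pervasiveness to obtain $x'\in X$ with $0<i(x')\leq i(b_1-a)\wedge i(b_2-a)$, and set $x:=a+x'$; reversing the translation gives $i(a)<i(x)\leq i(b_1)\wedge i(b_2)\leq i(b_1),i(b_2)$, which by bipositivity is exactly $a<x\leq b_1,b_2$.

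For \ref{properties.15.eq0}$\Rightarrow$\ref{properties.15.eq1}, take $b_1,b_2>0$ with $b_1\not\perp b_2$. Then $i(b_1),i(b_2)>0$, so $|i(b_1)|=i(b_1)$ and $|i(b_2)|=i(b_2)$, and $b_1\not\perp b_2$ transfers to $i(b_1)\not\perp i(b_2)$ in $Y$, i.e.\ $i(b_1)\wedge i(b_2)=|i(b_1)|\wedge|i(b_2)|\neq 0$; being $\geq 0$, this infimum is $>0$, so weak pervasiveness supplies $x\in X$ with $0<i(x)\leq i(b_1)\wedge i(b_2)\leq i(b_1),i(b_2)$, whence $0<x\leq b_1,b_2$ by bipositivity. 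Conversely, if $i(b_1)\wedge i(b_2)>0$ then $i(b_1),i(b_2)\geq i(b_1)\wedge i(b_2)>0$, so $b_1,b_2>0$; moreover $b_1\perp b_2$ would force $i(b_1)\wedge i(b_2)=0$ via the same dictionary, a contradiction, so $b_1\not\perp b_2$, and \ref{properties.15.eq1} produces $x\in X$ with $0<x\leq b_1,b_2$, hence $0<i(x)\leq i(b_1)\wedge i(b_2)$.

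I do not expect a genuine obstacle: the argument is essentially bookkeeping built on translation invariance of infima and on bipositivity. The only points needing a little care are the transfer of (non-)disjointness between $X$ and $Y$ and the identity $|i(b)|=i(b)$ for $b>0$, which is where order density of $i(X)$ in $Y$ enters; everything else is formal, and in particular the Archimedean hypothesis is not required here.
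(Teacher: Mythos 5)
Your proof is correct and takes essentially the same route as the paper's, which simply declares the equivalence of the first two statements and the implication from weak pervasiveness to the disjointness formulation ``immediate'' and spells out only the converse of the latter, using the same infimum--disjointness dictionary for positive elements and the coincidence of disjointness in $X$ and $Y$. Your version just fills in the translation argument via \eqref{Supremum_reinziehen} and the transfer of (non-)disjointness that the paper leaves implicit.
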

\begin{proof}
The equivalence \ref{properties.15.eq0} $\Leftrightarrow$ \ref{properties.15.eq2} and
the implication \ref{properties.15.eq0} $\Rightarrow$ \ref{properties.15.eq1} are immediate.
To show the implication \ref{properties.15.eq1} $\Rightarrow$ \ref{properties.15.eq0}, let $b_1, b_2\in X$ with $i(b_1)\Inf i(b_2)>0$. Then it follows $b_1, b_2>0$ and $b_1\not\perp b_2$. By assumption there exists $x\in X$ with $0<x\leq b_1,\hs b_2$, i.e. we have $0\leq i(x) \leq i(b_1)\Inf i(b_2)$.
\end{proof}
Note that the statement in Lemma~\ref{properties.15}\ref{properties.15.eq1} does not depend on the vector lattice cover. Similarly to pervasiveness, $X$ is weakly pervasive in $X^\rho$ if and only if $X$ is weakly pervasive in any vector lattice cover of $X$. This justifies that we simply call $X$ weakly pervasive.

As every pervasive pre-Riesz space is fordable, the next example establishes that a weakly pervasive pre-Riesz space need not be pervasive.
\begin{example}\label{properties.17}
\textit{An Archimedean weakly pervasive pre-Riesz space need not be fordable.}

Consider the vector space
\[Y=\left\{(y_i)_{i\in\ZZ} \in\ell^\infty(\ZZ)\mid| \lim_{i\to\infty}y_i \t{ exists}\right\}.\]
Endowed with the coordinatewise order, $Y$ is an Archimedean vector lattice.
In \c[Example~5.2]{2} it is shown that the Archimedean directed subspace
\[X:=\left\{(x_i)_{i\in\ZZ}\in\ell^\infty(\ZZ) \mid| \sum_{k=1}^\infty \frac{x_{-k}}{2^k}=\lim_{i\to\infty}x_i\right\}\]
of $Y$ is order dense in $Y$, that is, $X$ is an Archimedean pre-Riesz space and $Y$ is a vector lattice cover of $X$.

We show by contradiction that $X$ is not fordable. Let $z\in \ell^\infty(\ZZ)$ be such that $z_{-1}=1$ and $z_k=0$ for every $k\neq -1$, i.e.\ $z=(\ldots,0,0,0,1,z_0=0,0,0,\ldots)\in Y$. Then we have
\[\left\{z\right\}^{\t{d}}= \left\{y=(y_k)_{k\in\ZZ}\in Y\mid| y_{-1}=0\right\}.\]
Assume that $X$ is fordable and let $S\sub i(X)$ be such that $S^{\t{d}}=\left\{z\right\}^{\t{d}}$. Let $x\in Y$ be defined by $x_{-1}=0$ and $x_k=1$ for $k\neq -1$, that is, $x=(\ldots,1,1,0,x_0=1,1,\ldots)$. Then $x\in \left\{z\right\}^{\t{d}}=S^{\t{d}}$. Since for every $s\in S$ we have $s\perp x$, it follows $s=(\ldots,0,0,s_{-1},s_0=0,0,0,\ldots)$, where $s_{-1}\in\RR$. Clearly, $\lim_{k\to\infty}s_k=0$. Since $s$ belongs to $i(X)$, we obtain
\[0=\lim_{k\to\infty}s_k=\sum_{k=1}^\infty \frac{s_{-k}}{2^k}=\frac{s_{-1}}{2}.\]
This leads to $s_{-1}=0$ and therefore $s=0$. It follows $S=\left\{0\right\}$. This implies $S^{\t{d}} = Y \neq \left\{z\right\}^{\t{d}}$, a contradiction. We conclude that the pre-Riesz space $X$ is not fordable.

It is left to show that $X$ is weakly pervasive. Let $b,c\in X_+$ be two non-zero elements with $b\not\perp c$. Due to $i(b)\Inf i(c)\neq 0$ and $b,c>0$ there exists a coordinate $j$ such that $b_j,c_j>0$. Consider the following two cases.

For the first case, let $j\geq 0$. Define the sequence $x=(x_k)_{k\in\ZZ}$ by $x_j:=b_j\Inf c_j>0$ and $x_k:=0$ for $k\neq j$. Then we have
$\sum_{k=1}^\infty\frac{x_{-k}}{2^k} = 0 =\lim_{k\to\infty}x_k$
and thus $x\in X$ and $0<x\leq b, c$.

For the second case, let $j<0$. Set $C:=b_j\Inf c_j>0$. Since all coordinates of $b$ and $c$ are non-negative, we obtain the following estimates:
\[\sum_{k=1}^\infty\frac{b_{-k}}{2^k} \hs\geq\hs \frac{b_j} {2^{-j}} \hs\geq\hs \frac{C}{2^{-j}}>0
\quad\t{ and }\quad \sum_{k=1}^\infty\frac{c_{-k}}{2^k} \hs\geq\hs \frac{c_j} {2^{-j}} \hs\geq\hs \frac{C}{2^{-j}}>0.\]
Due to $b,c\in X$ this leads to
$\lim_{k\to\infty}b_k\geq \frac{C}{2^{-j}}>0$ and $\lim_{k\to\infty}c_k\geq \frac{C}{2^{-j}}>0$. 
Therefore there exist $N\in\NN_0$ and $\eps\in\RR_{>0}$ such that
\begin{equation}\label{properties.17.eq1}
\t{for every } k>N \t{ we have } b_k,c_k\geq \eps.
\end{equation}
Let $\alpha_1,\alpha_2\in\RR$ be defined by
$\alpha_1:=\min\left\{\eps,C\right\} $ and $\alpha_2:=\frac{\alpha_1}{2^{-j}}$.
Due to $j<0$ we obtain $\alpha_2=\frac{\min\left\{\eps,C\right\}}{2^{-j}}\leq \frac{\eps}{2^{-j}} <\eps$. Define a sequence $x=(x_k)_{k\in\ZZ}$ as follows:
\[x_k:=\begin{cases}
		\alpha_1 & \t{for }k=j,\\
		\alpha_2 & \t{for }k>N,\\
		0 & \t{ otherwise.}
		\end{cases}\]
Then 
$\lim_{k\to\infty}x_k = \alpha_2 <\eps$ and $\sum_{k=1}^\infty\frac{x_{-k}}{2^k} = \frac{\alpha_1}{2^{-j}}=\alpha_2$,
which yields $x\in X$.
On the one hand, from $\eps, C>0$ it follows $\alpha_1,\alpha_2>0$ and therefore $x>0$.
On the other hand, we have $x\leq b,c$. Indeed, for $k=j$ we obtain
\[x_j=\alpha_1=\min\left\{\eps,C\right\} \leq C = b_j\Inf c_j\]
and for $k>N$, using \eqref{properties.17.eq1} in the last step of the following equation, we have
\[x_k =\alpha_2 =\min\left\{\frac{\eps}{2^{-j}},\frac{C}{2^{-j}}\right\} \leq \eps \leq b_k, c_k.\]
We conclude that $0<x\leq b,c$, i.e.\ $X$ is weakly pervasive.
\end{example}
It remains an open question whether every fordable pre-Riesz space is weakly pervasive\footnote{In \c[Example~4.1.19]{PRS} the authors considers a pre-Riesz space $X$ which is fordable, but not pervasive. One can show that $X$ is weakly pervasive.}.
Another important property of pre-Riesz spaces is the RDP, which we next relate to the weak pervasiveness.
\begin{proposition}\label{properties.16}
If a pre-Riesz space $X$ has the RDP, then $X$ is weakly pervasive.
\end{proposition}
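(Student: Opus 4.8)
The plan is to verify the intrinsic criterion in Lemma~\ref{properties.15}\ref{properties.15.eq1}: fix a vector lattice cover $(Y,i)$ of $X$ and elements $b_1,b_2\in X$ with $b_1,b_2>0$ and $b_1\not\perp b_2$; I must produce $x\in X$ with $0<x\leq b_1,b_2$. Using the RDP at the end will upgrade a ``wrong-signed'' element to a positive one, so first I need to locate a suitable non-positive element below $b_1$ and $b_2$.

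Step one is to move the hypothesis into $Y$. Since $i(X)$ is order dense in $Y$, disjointness of $b_1,b_2$ in $X$ coincides with disjointness of $i(b_1),i(b_2)$ in $Y$; as $i(b_1),i(b_2)\in Y_+$ and $Y$ is a vector lattice, this is equivalent to $i(b_1)\Inf i(b_2)=0$. Hence $b_1\not\perp b_2$ forces $i(b_1)\Inf i(b_2)>0$. Step two extracts a seed element of $X$: order density of $i(X)$ in $Y$ gives $i(b_1)\Inf i(b_2)=\sup\left\{z\in i(X)\mid| z\leq i(b_1)\Inf i(b_2)\right\}$ (the same equality already used in the proof of Proposition~\ref{properties.6x}). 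If every such $z$ satisfied $z\leq 0$, then $0$ would be an upper bound of this set and its supremum would be $\leq 0$, contradicting Step one. So there is $b\in X$ with $i(b)\leq i(b_1)\Inf i(b_2)$ and $i(b)\not\leq 0$, and bipositivity yields $b\leq b_1$, $b\leq b_2$, $b\not\leq 0$.

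Step three applies the RDP. Since $0\leq b_1,b_2$ and $b\leq b_1,b_2$, the interpolation form of the RDP (stated in the Preliminaries) provides $x\in X$ with $0\leq x$, $b\leq x$ and $x\leq b_1,b_2$. If $x=0$ then $b\leq x=0$, contradicting $b\not\leq 0$; hence $x\neq 0$, so $0<x\leq b_1,b_2$, which is exactly what Lemma~\ref{properties.15}\ref{properties.15.eq1} demands, and $X$ is weakly pervasive.

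The proof is short; the only step needing a little care is step two, namely that order density legitimately upgrades $i(b_1)\Inf i(b_2)>0$ to the existence of an element of $i(X)$ lying below $i(b_1)\Inf i(b_2)$ but not below $0$. Everything else is routine use of bipositivity, the cone axioms, and the equivalent form of the RDP recorded earlier.
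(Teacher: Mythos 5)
Your proposal is correct and follows essentially the same route as the paper: verify Lemma~\ref{properties.15}\ref{properties.15.eq1}, use order density of $i(X)$ in $Y$ to extract an element $b\in X$ with $i(b)\leq i(b_1)\Inf i(b_2)$ and $b\not\leq 0$, and then interpolate via the RDP between $\{0,b\}$ and $\{b_1,b_2\}$ to get $x$ with $0<x\leq b_1,b_2$. The only (harmless) difference is that you spell out why $b_1\not\perp b_2$ yields $i(b_1)\Inf i(b_2)>0$, which the paper takes as immediate.
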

\begin{proof}
Let $X$ be a pre-Riesz space with RDP. We use Lemma~\ref{properties.15}\ref{properties.15.eq1}. Let $b_1,b_2 \in X$ be such that $b_1, b_2>0$ and $b_1\not\perp b_2$, i.e. $i(b_1)\Inf i(b_2)> 0$ in a vector lattice cover $(Y,i)$ of $X$. We set $a_1:=0$. Since $X$ is order dense in $Y$, for the set
\[S:=\left\{z\in i(X)\mid| z\leq i(b_1)\Inf i(b_2)\right\}\]
we have $i(b_1)\Inf i(b_2) = \sup S$.
The element $a_1=0$ is a lower bound of $i(b_1)\Inf i(b_2)$. Assume that $S\leq 0$, then $\sup S\leq 0$ leads to a contradiction. Therefore there exists $a_2\in X$ with $a_2\not\leq 0$ and such that $i(a_2)\in S$. 
For the four elements we have the relationship $a_1, a_2 \leq b_1,b_2$. Since $X$ has the RDP, 
there exists $x\in X$ such that $a_1,a_2 \leq x \leq b_1,b_2$.
Since $a_1=0$ and $a_2\not\leq 0$, it follows $x>0$. 
We conclude that there exists $x>0$ such that $x\leq b_1,b_2$, i.e.\ $X$ is weakly pervasive.
\end{proof}

\begin{example}\label{properties.18}
\textit{An Archimedean weakly pervasive pre-Riesz space need not have the RDP.}\\
In \c[Example~23]{02} the following pre-Riesz space is considered:
\[X:=\t{PA}[-1,1]\oplus\left\{\lambda q \mid| \lambda \in\RR\right\}=\left\{ f+\lambda q \mid|f \in \t{PA}[-1,1], \lambda\in\RR  \right\},\]
where $q\in C[-1,1]$, $q(t) = t^2$ for $t\in[-1,1]$, and $\t{PA}[-1,1]$ is the set of all continuous piecewise affine functions on $[-1,1]$. It is established that $C[-1,1]$ is a vector lattice cover of $X$ with the identity $i$ as the embedding map, and that $X$ is pervasive. Hence weakly pervasive. Moreover, $X$ does not have the RDP. 
\end{example}
\begin{remark}
It is not clear whether weak pervasiveness is equivalent to the following property:
\begin{align*}\label{properties.15.eq3} 
(\t{P})\quad & \t{For every } b_1, \ldots, b_n\in X \t{ with } b_1, \ldots, b_n>0 \t{ and } i(b_1)\Inf\ldots\Inf i(b_n)\neq 0 \\
&\t{there exists an element } x\in X \t{ such that } 0 < x \leq b_1, \ldots, b_n.
\end{align*}
However, due to \c[Lemma~1.53]{AliTou} it is straightforward that RDP implies (P). On the other hand, the property (P) does not imply RDP. Indeed, in Example~\ref{properties.18} the space $X$ satisfies (P). Let $b_1,\ldots,b_n\in X$ with $b_1,\ldots,b_n> 0$ be such that $0<i(b_1)\Inf\ldots\Inf i(b_n)\in C[-1,1]$. Then there is a piecewise affine function $y\in C[-1,1]$ such that $0<y\leq i(b_1)\Inf\ldots\Inf i(b_n)$. Since $y$ is piecewise affine, we have for $x:=i(y)^{-1}\in X$ that $0<x\leq b_1,\ldots,b_n$. That is, $X$ satisfies (P).
\end{remark}

Note that a pervasive pre-Riesz space which does not have the RDP is given in \c[Example~23]{02}. The next example shows that RDP does not imply pervasiveness, in general.
\begin{example}\label{properties.7}
\textit{An Archimedean pre-Riesz space with RDP need not be pervasive.}

Consider the vector space of all (continuous) rational functions on $[0,1]$, i.e.\
\[X:=\left\{x\in C[0,1] \mid| x =\textstyle{\frac{p}{q}},\hs \forall t\in[0,1]: q(t)\neq 0, \t{ and } p,q \t{ polynomial}\right\},\]
endowed with the pointwise order.
Then $X$ is a subspace of the Archimedean vector lattice $C[0,1]$ and hence is Archimedean as well. Clearly, $X$ is directed and therefore pre-Riesz. Riesz established that $X$ has the RDP, see \c[Example 1.56]{AliTou}.

To show that $X$ is not pervasive, we use the characterization of Theorem~\ref{properties.2b}\ref{properties.2b.it3}, which avoids the computation of a vector lattice cover of $X$.
Let $b(t):=-16(t-\frac{1}{4})^2 +1$ for $t\in[0,1]$. Then the polynomial $b$ belongs to $X$. We have to show that there does not exist $x\in X$ such that $0<x \leq u$ holds for every $u\in X$ with $0,b\leq u$.

First consider for every $s\in\hs]\frac{1}{2},1]$ the positive parabola
\[u_s(t):=\frac{1}{(\frac{1}{2}-s)^2}(t-s)^2.\]
Notice that the maximal value of the parabola $b$ is the real number $b(\frac{1}{4})=1$, and for $t\in[\frac{1}{2},1]$ we have $b(t)\leq 0$. Moreover, for each $u_s$ with $s\in\hs]\frac{1}{2},1]$ we obtain $u_s(\frac{1}{2})=1$ and $u_s(s)=0$. It follows for the positive function $u_s$ that $b\leq u_s$ for every $s\in\hs]\frac{1}{2},1]$.

Assume now that there exists $x\in X$ such that $0<x\leq u$ holds for every positive $u$ with $b\leq u$. In particular, we have $0\leq x(s)\leq u_s(s) = 0$ for each $s\in\hs]\tfrac{1}{2},1]$. That is, $x(]\tfrac{1}{2},1])=0$.
However, the element $x$ has a representation $x = \frac{p}{q}$ with polynomials $p$ and $q$. Therefore the numerator $p$ must be zero on the whole of the interval $]\frac{1}{2},1]$. Due to $p$ being polynomial it follows $p=0$, which implies $x=0$, a contradiction to $x>0$. Theorem~\ref{properties.2b} yields that $X$ is not pervasive.
\end{example}

There are Archimedean pre-Riesz spaces, which are not vector lattices, but are weakly pervasive. A simple example is the space $C^1[0,1]$ of differentiable functions on the interval $[0,1]$.
On the other hand, not every Archimedean pre-Riesz space is weakly pervasive, as we see in the following example.
We use an example from \cite{Gaa2005}, where the focus is to construct an order bounded, non-regular linear functional on an Archimedean directed ordered vector space. The underlying space is a pre-Riesz space, but there is no obvious candidate for a vector lattice cover.
So far, it was an open question whether this pre-Riesz space is pervasive. Using the characterization in Lemma~\ref{properties.15}, we establish that the space is not even weakly pervasive.
\begin{example}\label{properties.14}
\textit{An Archimedean pre-Riesz space need not be weakly pervasive.}

As in \cite{Gaa2005}, for $A\subseteq [0,\infty[$ let $\1_{A}$ denote the corresponding indicator function, for $n,k\in\NN$ define
\begin{eqnarray*}
e_n: [0,\infty[\to \RR, & &t\mapsto\1_{[n-1,n[}(t),\\
u_{n,k}\colon [0,\infty[\to \RR, & &t\mapsto nt\1_{\left[0,\frac{1}{n}\right]}(t)+\textstyle\frac{1}{k}\1_{\left\{n+\textstyle\frac{1}{k}\right\}}(t),
\end{eqnarray*}
and consider the subspace
$X:=\operatorname{span}\left\{e_n, u_{n,k} \mid| n,k\in\NN\right\}$
of $\RR^{[0,\infty[}$ with point-wise order. 
As is established in \cite{Gaa2005}, $X$ is directed. Moreover, since $X$ is a subspace of the Archimedean vector lattice $\RR^{[0,\infty[}$, $X$ is Archimedean and therefore a pre-Riesz space.

We show that $X$ is not weakly pervasive. To that end we use the characterization in Lemma~\ref{properties.15}\ref{properties.15.eq1}.
For $b_1:=2u_{1,2}$ and $b_2:=e_2$ we have $b_1,b_2>0$. We show $b_1\not\perp b_2$ by establishing that $\left\{b_1-b_2,-b_1+b_2\right\}^u\neq \left\{b_1+b_2,-b_1-b_2\right\}^u$. Consider the element $v:=2u_{1,3}+e_2$. We show that $v\in \left\{b_1-b_2,-b_1+b_2\right\}^u$.
Indeed, for $t\in[0,1]$ we have $v(t) = 2u_{1,3}(t) = 2t \geq \pm 2t =\pm(b_1-b_2)(t)$. 
For $t\in[1,2]\ohne\left\{\tfrac{3}{2},\tfrac{4}{3}\right\}$ we obtain $v(t)=e_2(t) =1 \geq \pm 1 = \pm(b_1-b_2)(t)$.
Moreover, we get $v(\tfrac{4}{3})=\tfrac{5}{3} \geq \pm 1 = \pm(b_1-b_2)(\tfrac{4}{3})$ and $v(\tfrac{3}{2})=1 \geq \mp \tfrac{1}{2} = \pm(b_1-b_2)(\tfrac{3}{2})$.
We conclude $v\in \left\{b_1-b_2,-b_1+b_2\right\}^u$. On the other hand, $v\notin\left\{b_1+b_2,-b_1-b_2\right\}^u$, as $v(\tfrac{3}{2})=1 < 2=(b_1+b_2)(\tfrac{3}{2})$. This establishes $b_1\not\perp b_2$.

Assume there is $x\in X$ with $0< x \leq  b_1, b_2$. Then for $t\in [0,\infty[\ohne\left\{\tfrac{3}{2}\right\}$ we have $x(t)=0$ and $0< x(\tfrac{3}{2}) \leq 1$. It follows that $x$ is a non-zero multiple of $u_{1,2}$, which is a contradiction. That is, $X$ is not weakly pervasive.
\end{example}
\bibliographystyle{plain}

\end{document}